\numberwithin{equation}{section}
\newcommand{\figcaption}[1]{\def\@captype{figure}\caption{#1}}
\newcommand{\tblcaption}[1]{\def\@captype{table}\caption{#1}}
\newcommand{\mG}{\mathbf{\G}}
\def\rpkern{\mathchoice{\kern-1.45em}{\kern-1.11em}{}{}}%
\def\grpkern{\mathchoice{\kern-1.013em}{\kern-0.825em}{}{}}%
\def\rpkern{\mathchoice{\kern-1.44em}{\kern-1.11em}{}{}}%
\def\grpkern{\mathchoice{\kern-1.00em}{\kern-0.81em}{}{}}%
\def\rpkern{\mathchoice{\kern-1.472em}{\kern-1.14em}{}{}}%
\def\grpkern{\mathchoice{\kern-1.00em}{\kern-0.815em}{}{}}%
\def\minibullet{\mathchoice%
{\raise0.2ex\hbox{$\scriptstyle\bullet$}}%
{\raise0.26ex\hbox{$\scriptscriptstyle\bullet$}}{}{}}
\def\butabullet{\mathchoice%
{\raise0.8ex\hbox{$\scriptstyle\bullet$}{\kern-0.365em}%
\lower0.4ex\hbox{$\scriptstyle\bullet$}}%
{\raise0.75ex\hbox{$\scriptscriptstyle\bullet$}{\kern-0.335em}%
\lower0.25ex\hbox{$\scriptscriptstyle\bullet$}}{}{}}
\def\regprod{\operatorname{\coprod\rpkern\prod}\displaylimits}
\def\customprod#1#2%
\newcommand{\bA}{\mathbb{A}}
\newcommand{\bC}{\mathbb{C}}
\newcommand{\bN}{\mathbb{N}}
\newcommand{\bQ}{\mathbb{Q}}
\newcommand{\bR}{\mathbb{R}}
\newcommand{\bZ}{\mathbb{Z}}
\newcommand{\cO}{\mathcal{O}}
\newcommand{\ff}{\mathfrak{f}}
\newcommand{\fp}{\mathfrak{p}}
\renewcommand{\a}{\alpha}
\renewcommand{\d}{\delta}
\newcommand{\e}{\varepsilon}
\newcommand{\z}{\zeta}
\newcommand{\m}{\mu}
\renewcommand{\r}{\rho}
\renewcommand{\c}{\chi}
\newcommand{\vp}{\varphi}
\newcommand{\G}{\Gamma}
\renewcommand{\L}{\Lambda}
\renewcommand{\Re}{\mathrm{Re}\,}
\renewcommand{\Im}{\mathrm{Im}\,}
\newcommand{\GL}{\mathrm{GL}}
\newcommand{\p}{\partial}
\newcommand{\bsym}{\boldsymbol}
\newcommand{\boldtitle}[1]{\title{\bfseries #1}}
\newenvironment{MSC}{%
\smallbreak
\noindent \textbf{2010\ Mathematics Subject Classification\,:}}
\newenvironment{keywords}{%
\noindent\textbf{Key words and phrases\,:}\itshape}
\newenvironment{Acknowledgement}{%
\noindent\textit{Acknowledgement.}}
\theoremstyle{theorem}
\newtheorem*{multitheorem}{\variable@name}
\theoremstyle{definition}
\newcommand{\variable@name}{Theorem}
\newtheorem*{multiproclaim}{\variable@name}
\theoremstyle{plain}
\newtheorem{thm}{Theorem}[section]
\newtheorem{prop}[thm]{Proposition}
\newtheorem{lem}[thm]{Lemma}
\newtheorem{cor}[thm]{Corollary}
\theoremstyle{definition}
\newtheorem{example}[thm]{Example}
\newtheorem{remark}[thm]{Remark}
\author{
 Masato WAKAYAMA\thanks{Partially supported by Grant-in-Aid for Scientific Research (B) No. 21340011.} 
 \ and
Yoshinori YAMASAKI\thanks{Partially supported by Grant-in-Aid for Young Scientists (B) No. 21740019.}
} 
\date{\today}
\begin{document}

\setlength{\baselineskip}{15pt}
\maketitle

\begin{abstract}
 We establish ``higher depth'' analogues of regularized determinants
 due to Milnor for zeros of cuspidal automorphic $L$-functions
 of $\GL_d$ over a general number field.
 This is a generalization of the result of Deninger about the regularized determinant 
 for zeros of the Riemann zeta function. 

\begin{MSC}
 {\it Primary} 11F66
 {\it Secondary} 11M36,
\end{MSC} 
\begin{keywords}
 cuspidal automorphic $L$-functions,
 regularized products (determinants), 
 explicit formulas, 
 grand Riemann hypothesis.
\end{keywords}
\end{abstract}

\section{Introduction}

 For a complex sequence $\bsym{a}=\{a_n\}_{n\in I}$, 
 the zeta regularized product of $\bsym{a}$ is defined by 
\[
 \regprod_{n\in I}a_n:=\exp\Bigl(-\frac{d}{ds}\z_{\bsym{a}}(s)\Bigl|_{s=0}\Bigr),
\]
 where $\z_{\bsym{a}}(s):=\sum_{n\in I}a_n^{-s}$ is the zeta function attached to
 $\bsym{a}$.
 Here, we let $a_n^{-s}:=\exp(-s\log{a_n})$ 
 with $\log$ being the principal branch of the logarithm  
 and assume that $\z_{\bsym{a}}(s)$ converges absolutely in some right half-plane,  
 admits a meromorphic continuation to a region containing the origin and is holomorphic
 at the origin. 
 This gives a generalization of the usual product. 
 In fact, if $I$ is a finite set,
 then $\regprod_{n\in I}a_n=\prod_{n\in I}a_n$.
 Let $\z(s):=\sum_{m\ge 1}m^{-s}$ be the Riemann zeta function
 and $\mathcal{R}$ the set of all non-trivial zeros of $\z(s)$.
 In 1992, Deninger \cite{Deninger1992} (see also \cite{SchroterSoule1994})
 obtained the following formula.
\begin{equation}
\label{for:Deninger}
 \Xi(z)
:=\regprod_{\r\in \mathcal{R}}\Bigl(\frac{z-\r}{2\pi}\Bigr)
=2^{-\frac{1}{2}}\frac{z(z-1)}{(2\pi)^2}\pi^{-\frac{z}{2}}\G\Bigl(\frac{z}{2}\Bigr)\z(z)
=2^{-\frac{1}{2}}\frac{z(z-1)}{(2\pi)^2}\Lambda(z),
\end{equation} 
 where $\G(z)$ is the gamma function and 
 $\Lambda(z):=\pi^{-\frac{z}{2}}\G(\frac{z}{2})\z(z)$
 is the complete Riemann zeta function.
 This is called the regularized product expression of the Riemann zeta function.
 The aim of the present paper is to define ``higher depth'' generalizations
 of the formula \eqref{for:Deninger} properly for $L$-functions attached to
 irreducible cuspidal automorphic representations of $\GL_d$ over a general number field and 
 calculate ``higher depth regularized products''
 for all non-trivial zeros of such $L$-functions explicitly.

 Now let us explain what the higher depth regularized products are. 
 In \cite{M},
 from the viewpoint of the Kubert identity
 which plays an important role in the study of Iwasawa theory,
 Milnor introduced the following 
 ``higher depth gamma function'' $\mG_r(z)$ for $r\in\bN$;
\[
 \mG_{r}(z):=\exp\Bigl(\frac{\p}{\p s}\z(s,z)\Bigl|_{s=1-r}\Bigr),
\]
 where $\z(s,z):=\sum^{\infty}_{m=0}(m+z)^{-s}$ is the Hurwitz zeta function.
 We call $\mG_r(z)$ a Milnor gamma function of depth $r$.
 He studied, for examples, special values, a Stirling type formula
 (that is, an asymptotic formula as $z\to+\infty$)
 and functional relations among them (see also \cite{KOW}). 
 We notice that the well-known Lerch formula
 $\exp(\frac{\p}{\p s}\z(s,z)\bigl|_{s=0})=\log{\frac{\G(z)}{\sqrt{2\pi}}}$
 implies that $\mG_{1}(z)=\frac{\G(z)}{\sqrt{2\pi}}$,
 whence $\mG_{r}(z)$ indeed gives a generalization of $\G(z)$.
 The Milnor gamma functions are also important in a sense that 
 there are many arithmetic objects
 whose associated zeta or $L$-function can be written as a polynomial in the Riemann zeta functions
 (e.g., the real analytic Eisenstein series (\cite{Bump1997})
 and the prehomogeneous vector space of symmetric matrices (\cite{IbukiyamaSaito1995})).
 In fact, in the case of studying their corresponding regularized product
 (equivalently, the derivative at the origin), 
 the Milnor gamma functions naturally come up.
 Based on the study of Milnor,
 we define a {\it higher depth regularized product} of the sequence $\bsym{a}$ by 
\[
 \underset{n\in I}{\regprod}^{[r]}a_n
:=\exp\Bigl(-\frac{d}{ds}\z_{\bsym{a}}(s)\Bigl|_{s=1-r}\Bigr).
\]
 Here, we further assume that $\z_{\bsym{a}}(s)$ 
 admits a meromorphic continuation to some region containing $s=1-r$
 and is holomorphic at the point.
 It is clear that $\regprod^{[1]}_{n\in I}a_n=\regprod_{n\in I}a_n$.
 Notice that, using this notation,
 it can be written as $\mG_r(z)^{-1}=\regprod^{[r]}_{n\ge 0}(n+z)$.

 Let $K$ be an algebraic number field of degree $n$,
 $\cO_K$ the ring of integers and $\bA_{K}$ the adele ring of $K$.
 Let $S_f$ and $S_{\infty}$ be the sets of all finite and infinite places of $K$, respectively.
 Write $S_{\infty}=S_{\bR}\sqcup S_{\bC}$, 
 where $S_{\bR}$ (resp. $S_{\bC}$) is the set of all real (resp. complex) places of $K$
 and put $r_1=\# S_{\bR}$ (resp. $r_2=\# S_{\bC}$).
 Let $\pi=\otimes_{v}\pi_v$ be an
 irreducible cuspidal automorphic representation of $\GL_d(\bA_{K})$.
 Then, from the general theory (see, e.g., \cite{{GodementJacquet1972}}), 
 we can define the $L$-function $L(s,\pi)$ by the Euler product  
\[
 L(s,\pi)
:=\prod_{v\in S_{f}}\prod^{d}_{j=1}\bigl(1-\alpha_{v,j}(\pi)q_v^{-s}\bigr)^{-1}
\qquad (\Re(s)>1),
\]
 where $q_v$ is the residue degree of the local field $K_v$
 with $K_v$ being the $v$-adic completion of $K$ at $v$
 and the complex number $\a_{v,j}(\pi)$ is determined
 by the local representation $\pi_v$ for each $v\in S_f$.
 Moreover, let $\L(s,\pi)$ be the complete $L$-function define by  
\[
 \L(s,\pi):=L_{\infty}(s,\pi)L(s,\pi), 
\]
 where $L_{\infty}(s,\pi)$ is defines by 
\[
 L_{\infty}(s,\pi)
:=\prod_{v\in S_{\infty}}\prod^{d}_{j=1}\G_v\bigl(s+\mu_{v,j}(\pi)\bigr).
\]
 Here, $\G_v(s)$ is defined by 
\[
 \G_v(s):=N_v(N_v\pi)^{-\frac{N_v s}{2}}\G\Bigl(\frac{N_v s}{2}\Bigl)
\]
 with $N_v:=1$ if $v\in S_{\bR}$ and $N_v:=2$ otherwise 
 and $\mu_{v,j}(\pi)$ is a complex number determined by $\pi_v$ for each $v\in S_{\infty}$
 (see Example~\ref{ex:d=1} for the case $d=1$).
 We note that $\Re(\mu_{v,j}(\pi))>-\frac{1}{2}$.
 It is known that
 $\L(s,\pi)$ can be continued analytically to the whole plane $\bC$
 except in the case $d=1$ and $\pi$ is the trivial character ${\bf 1}$ 
 (which implies that $L(s,\pi)$ is the Dedekind zeta function $\z_{K}(s)$ of $K$) 
 when $\L(s,\pi)$ has simple poles at $s=0$ and $s=1$. 
 Moreover, it satisfies the functional equation
\begin{equation}
\label{for:fe}
  N^{\frac{s}{2}}_{\pi}\L(s,\pi)
=\e_{\pi}N^{\frac{1-s}{2}}_{\pi}\L(1-s,\overline{\pi}),
\end{equation}
 where $N_{\pi}\ge 1$ is an integer called the conductor of $\pi$,
 $\e_{\pi}$ is the root number which has modulus $1$
 and $\overline{\pi}$ is the contragradient representation of $\pi$.

 For $T>0$, let $\mathcal{R}(T,\pi)$ be the set of non-trivial zeros of $L(s,\pi)$ 
 (that is, the zeros of $\L(s,\pi)$) with $|\Im(\rho)|<T$ and
 $\mathcal{R}(\pi):=\lim_{T\to \infty}\mathcal{R}(T,\pi)$.
 Then, the grand Riemann hypothesis asserts that $\Re(\rho)=\frac{1}{2}$ 
 for all $\rho\in\mathcal{R}(\pi)$.
 For $\Re(s)>1$ and $\Re(z)>1$, defined the function $\xi(s,z,\pi)$ by 
\[
 \xi(s,z,\pi)
:=\sum_{\rho\in \mathcal{R}(\pi)}\Bigl(\frac{z-\rho}{2\pi}\Bigr)^{-s}.
\]
 In this paper, the sum  
 $\sum_{\rho\in \mathcal{R}(\pi)}$ always means $\lim_{T\to\infty}\sum_{\rho\in\mathcal{R}(T,\pi)}$.
 The purpose of the present paper is to calculate the higher depth regularized product 
\begin{equation}
\label{def:Xi_r}
 \Xi_{r}(z,\pi)
:=\underset{\rho\in \mathcal{R}(\pi)}{\regprod}^{\!\!\!\!\![r]}\Bigl(\frac{z-\rho}{2\pi}\Bigr)
=\exp\Bigl(-\frac{\p}{\p s}\xi(s,z,\pi)\Bigl|_{s=1-r}\Bigr).
\end{equation}
 Remark that $\Xi_{r}(z,\pi)$ is well-defined 
 because, when $\Re(z)>1$, it will be shown that
 $\xi(s,z,\pi)$ admits a meromorphic continuation to $\bC$ as a function of $s$ and,
 in particular, is holomorphic at $s=1-r$ for all $r\in\bN$
 (Proposition~\ref{prop:merocon_xi}). 

 To state our main result,
 let us introduce a ``poly $L$-function'' $L_{r}(s,\pi)$. 
 Let $Li_r(z):=\sum^{\infty}_{m=1}\frac{z^m}{m^r}$ be the polylogarithm of degree $r$
 and $H_r(z):=\exp(-Li_r(z))$. 
 Then, the function $L_{r}(s,\pi)$ is defined by the following Euler product.
\begin{equation}
\label{def:poly-L}
 L_r(s,\pi)
:=\prod_{v\in S_f}\prod^{d}_{j=1}H_r\bigl(\a_{v,j}(\pi)q_v^{-s}\bigr)^{-(\log{q_v})^{1-r}}.
\end{equation}
 This infinite product converges absolutely for (at least) $\Re(s)>\frac{3}{2}-\frac{1}{d^2+1}$
 (Lemma~\ref{lem:absoconv}),
 whence $L_r(s,\pi)$ defines a holomorphic function in the region.
 It is easy to see that this is a poly-generalization of the $L$-function.
 Actually, when $r=1$, since $Li_1(z)=-\log{(1-z)}$ and hence $H_1(z)=1-z$,
 we have $L_1(s,\pi)=L(s,\pi)$.
 Several analytic properties of $L_r(s,\pi)$ are studied in Section~\ref{sec:GRH}.
 Moreover, let
\[
 L_{r,\infty}(s,\pi)
:=\prod_{v\in S_{\infty}}\prod^{d}_{j=1}\G_{r,v}\bigl(s+\mu_{v,j}(\pi)\bigr),
\]
 where   
\[
 \G_{r,v}(s):=\sqrt{2N_v}(N_v\pi)^{-\frac{1}{r}(N_v\pi)^{1-r}B_r(\frac{N_v
 s}{2})}\mG_r\Bigl(\frac{N_v s}{2}\Bigr)^{(N_v\pi)^{1-r}}
\]
 with $B_r(s)$ being the Bernoulli polynomial defined by the generating function
 $\frac{te^{ts}}{e^t-1}=\sum^{\infty}_{r=0}B_r(s)\frac{t^r}{r!}$. 
 Notice that, since $B_1(s)=s-\frac{1}{2}$ and $\mG_1(s)=\frac{\G(s)}{\sqrt{2\pi}}$,
 we have $\G_{1,v}(s)=\G_{v}(s)$ for all $v\in S_{\infty}$,
 whence $L_{1,\infty}(s,\pi)=L_{\infty}(s,\pi)$.
 Finally, let 
\[
 \L_r(s,\pi)
:=L_{r,\infty}(s,\pi)L_{r}(s,\pi)^{(-1)^{r-1}(r-1)!(2\pi)^{1-r}}.
\] 
 From the above observations, one has already known that $\L_1(s,\pi)=\L(s,\pi)$.  
 Then, the main result of the paper is as follows.

\begin{thm}
\label{thm:main}
 For $\Re(z)>\frac{3}{2}-\frac{1}{d^2+1}$, we have  
\begin{align}
\label{for:HDregCusp}
 \Xi_{r}(z,\pi)
&=\underset{\rho\in \mathcal{R}(\pi)}{\regprod}^{\!\!\!\!\![r]}\Bigl(\frac{z-\rho}{2\pi}\Bigr)
=2^{-\frac{1}{2}nd}\Biggl[\Bigl(\frac{z}{2\pi}\Bigr)^{(\frac{z}{2\pi})^{r-1}}
\Bigl(\frac{z-1}{2\pi}\Bigr)^{(\frac{z-1}{2\pi})^{r-1}}\Biggr]^{\d_{1,{\bf 1}}}\L_r(z,\pi),
\end{align}
 where $\d_{1,{\bf 1}}=\d_{1,{\bf 1}}(\pi):=1$ if $d=1$ and $\pi={\bf 1}$ and $0$ otherwise.
\end{thm}

 Letting $r=1$,
 one immediately obtains the following corollary,
 which in fact reproduces the result of Deninger
 by letting $K=\bQ$, $d=1$ and $\pi={\bf 1}$.

\begin{cor}
 We have
\begin{align}
\label{for:regCusp}
 \regprod_{\rho\in \mathcal{R}(\pi)}\Bigl(\frac{z-\rho}{2\pi}\Bigr)
=2^{-\frac{1}{2}nd}\Biggl[\frac{z(z-1)}{(2\pi)^2}\Biggr]^{\d_{1,{\bf 1}}}\L(z,\pi).
\end{align}
\qed
\end{cor}

 Notice that the expression \eqref{for:regCusp} is valid for all $z\in\bC$.
 See Corollary~\ref{cor:anaXi} for an analytic continuation of $\Xi_{r}(z,\pi)$ for $r\ge 2$.

 The organization of the paper is as follows.
 In Section~\ref{sec:explicit formula},
 we establish a Weil type explicit formula for $L(s,\pi)$.
 In Section~\ref{sec:proof}, using the explicit formula,
 we first give a meromorphic continuation of $\xi(s,z,\pi)$.
 Then, we prove the main result, that is, 
 calculate the derivatives of $\xi(s,z,\pi)$ at $s=1-r$.
 In Section~\ref{sec:GRH},
 we study several analytic properties of the poly $L$-function $L_r(s,\pi)$.
 In particular, we restate the grand Riemann hypothesis 
 in terms of analyticity of $L_2(s,\pi)$. 

 Throughout the present paper, 
 we denote by $\bC$, $\bR$ and $\bQ$ the fields of
 all complex, all real and all rational numbers, respectively.
 We also use the notation $\bZ$ and $\bN$ to denote the sets of
 all rational and all positive integers, respectively.  

\section{Explicit formulas}
\label{sec:explicit formula}

 To prove the main theorem,
 we need a Weil type explicit formula for $L(s,\pi)$.
 To obtain this,
 we start from the following lemma essentially established in \cite[{Proposition~3.1}]{Michel2002}.

\begin{lem}
 Let $Q>1$ and $\phi(x)$ be a function in the Schwartz space $\mathcal{S}(\bR)$ whose Fourier transform
 $\hat{\phi}(y):=\int^{\infty}_{-\infty}\phi(x)e^{-2\pi ixy}dx$ has compact support
 (in particular, $\phi$ can be extended as a smooth function on $\bC$). 
 Then, it holds that  
\begin{align}
\label{for:explicit formula 1}
 \sum_{\rho\in \mathcal{R}(\pi)}\phi\Bigl(\frac{\log{Q}}{2\pi i}\bigl(\rho-\frac{1}{2}\bigr)\Bigr)
&=\frac{\log{N_{\pi}}}{\log{Q}}\hat{\phi}(0)
+\Biggl[\phi\Bigl(\frac{\log{Q}}{4\pi i}\Bigr)+\phi\Bigl(-\frac{\log{Q}}{4\pi
 i}\Bigr)\Biggr]\d_{1,{\bf 1}}\\
& \ \ \ -\frac{1}{\log{Q}}\sum_{v\in S_{f}}\sum^{\infty}_{l=1}
\Biggl(\frac{\L_{\pi}(q_v^{l})}{q_v^{\frac{1}{2}l}}\hat{\phi}\Bigl(\frac{l\log{q_v}}{\log{Q}}\Bigr)+
\frac{\L_{\overline{\pi}}(q_v^{l})}{q_v^{\frac{1}{2}l}}\hat{\phi}\Bigl(-\frac{l\log{q_v}}{\log{Q}}\Bigr)\Biggr)\nonumber\\
& \ \ \ +\frac{1}{\log{Q}}\sum_{v\in S_{\infty}}\sum^{d}_{j=1}H_{v,j}(Q,\phi,\pi),\nonumber
\end{align}
 where $\L_{\pi}(q_v^{l}):=\log{q_v}\sum^{d}_{j=1}\a_{v,j}(\pi)^l$ and 
\[
 H_{v,j}(Q,\phi,\pi)
:=\int^{\infty}_{-\infty}\phi(t)
\Biggl(\frac{\G_v'}{\G_v}\Bigl(\frac{1}{2}+\mu_{v,j}(\pi)+\frac{2\pi it}{\log{Q}}\Bigr)
+\frac{\G_v'}{\G_v}\Bigl(\frac{1}{2}+\mu_{v,j}(\overline{\pi})-\frac{2\pi it}{\log{Q}}\Bigr)\Biggr)dt.
\]
\qed
\end{lem}

 Following the argument of Barner \cite{Barner1981}, from \eqref{for:explicit formula 1},
 we next establish a Weil type explicit formula.   
 For a function $F:\bR\to\bC$ of bounded variation
 (i.e., $V_{\bR}(F)<\infty$ where $V_{\bR}(F)$ is the total variation of $F$ on $\bR$),
 we define the function $\Phi_F(s)$ for $s\in\bC$ by
\begin{align*}
 \Phi_F(s)
:=\hat{F}\Bigl(-\frac{s-\frac{1}{2}}{2\pi i}\Bigr)
=\int^{\infty}_{-\infty}F(x)e^{(s-\frac{1}{2})x}dx.
\end{align*}
 Moreover, for $v\in S_{\infty}$ and $1\le j\le d$,
 let $F_{v,j}(x,\pi):=F(x)e^{-i\eta_{v,j}(\pi) x}$ and 
 $\widetilde{F}_{v,j}(x,\pi):=F_{v,j}(x,\pi)+F_{v,j}(-x,\pi)$
 where we write $\mu_{v,j}(\pi)=\xi_{v,j}(\pi)+i\eta_{v,j}(\pi)$
 with $\xi_{v,j}(\pi),\eta_{v,j}(\pi)\in\bR$. 

\begin{prop}
\label{prop:Explicitformula}
 Let $F:\bR\to\bC$ be a function of bounded variation.
 Suppose that $F$ satisfies the following conditions; 
\begin{enumerate}
 \item[$\mathrm{(a)}$]
 There is a positive constant $b$ such that $V_{\bR}(F(x)e^{(\frac{1}{2}+b)|x|})<\infty$. 
 \item[$\mathrm{(b)}$]
 $F$ is ``normalized'', that is, $2F(x)=F(x+0)+F(x-0)$ for $x\in\bR$.
\item[$\mathrm{(c)}$]
 For any $v\in S_{\infty}$ and $1\le j\le d$,
 $\widetilde{F}_{v,j}(x,\pi)=2F(0)+O(|x|)$ as $|x|\to 0$.
\end{enumerate}
 Then, it holds that 
\begin{align}
\label{for:explicitformula}
\sum_{\rho\in \mathcal{R}(\pi)}\Phi_F(\rho)
&=F(0)\log{\frac{N_{\pi}}{(2^{2r_2}\pi^{n})^d}}+\bigl(\Phi_F(0)+\Phi_F(1)\bigr)\d_{1,{\bf 1}}\\
&\ \ \ -\sum_{v\in S_f}\sum^{\infty}_{l=1}
\Biggl(\frac{\L_{\pi}(q_v^l)}{q_v^{\frac{1}{2}l}}F(l\log{q_v})
+\frac{\L_{\overline{\pi}}(q_v^l)}{q_v^{\frac{1}{2}l}}F(-l\log{q_v})\Biggr)
+\sum_{v\in S_{\infty}}\sum^{d}_{j=1}W_{v,j}(F,\pi),\nonumber
\end{align}
 where 
\[
  W_{v,j}(F,\pi)
:=\int^{\infty}_{0}\Biggl(\frac{N_{v}F(0)}{x}
-\widetilde{F}_{v,j}(x,\pi)
\frac{e^{(\frac{2}{N_v}-\frac{1}{2}-\xi_{v,j}(\pi))x}}{1-e^{-\frac{2}{N_v}x}}\Biggr)
 e^{-\frac{2}{N_{v}}x}dx.
\]
\end{prop}
\begin{proof}
 Letting $Q=e^{2\pi}$ and $\phi(x)=\hat{F}(-\frac{x}{2\pi})$ in \eqref{for:explicit formula 1}
 and noting that $\hat{\phi}(y)=2\pi F(2\pi y)$, we have  
\begin{align*}
\sum_{\rho\in \mathcal{R}(\pi)}\Phi_F(\rho)
&=F(0)\log{N_{\pi}}+\bigl(\Phi_F(0)+\Phi_F(1)\bigr)\d_{1,{\bf 1}}\\
&\ \ \ -\sum_{v\in S_f}\sum^{\infty}_{l=1}
\Biggl(\frac{\L_{\pi}(q_v^l)}{q_v^{\frac{1}{2}l}}F(l\log{q_v})
+\frac{\L_{\overline{\pi}(q_v^l)}}{q_v^{\frac{1}{2}l}}F(-l\log{q_v})\Biggr)
+\sum_{v\in S_{\infty}}\sum^{d}_{j=1}Y_{v,j}(F,\pi),
\end{align*}
 where 
\[
 Y_{v,j}(F,\pi)
:=\frac{1}{2\pi}\int^{\infty}_{-\infty}\hat{F}\Bigl(-\frac{t}{2\pi}\Bigr)
\Biggl(\frac{\G_v'}{\G_v}\Bigl(\frac{1}{2}+\mu_{v,j}(\pi)+it\Bigr)
+\frac{\G_v'}{\G_v}\Bigl(\frac{1}{2}+\mu_{v,j}(\overline{\pi})-it\Bigr)\Biggr)dt.
\]
 Notice that the conditions (a) and (b) guarantee the convergence
 of the infinite sum $\sum_{\rho\in \mathcal{R}(\pi)}\Phi_F(\rho)$ 
 (more precisely, see \cite{Barner1981}).
 Now, let us calculate the integral $Y_{v,j}(F,\pi)$.
 Since $\mu_{v,j}(\overline{\pi})=\overline{\mu_{v,j}(\pi)}=\xi_{v,j}(\pi)-i\eta_{v,j}(\pi)$,
 using the formula 
\[
 \frac{\G_v'}{\G_v}(s)
=-\frac{N_v}{2}\log{N_v \pi}+\frac{N_v}{2}\frac{\G'}{\G}\Bigl(\frac{N_v s}{2}\Bigr),
\]
 we have
\begin{align*}
 Y_{v,j}(F,\pi)
&=\frac{1}{2\pi}\int^{\infty}_{-\infty}
\Biggl[\hat{F}\Bigl(-\frac{t-\eta_{v,j}(\pi)}{2\pi}\Bigr)+\hat{F}\Bigl(\frac{t+\eta_{v,j}(\pi)}{2\pi}\Bigr)\Biggr]\frac{\G_v'}{\G_v}\Bigl(\frac{1}{2}+\xi_{v,j}(\pi)+it\Bigr)dt\\
&=\frac{1}{2\pi}\int^{\infty}_{-\infty}
\widetilde{F}_{v,j}(\cdot,\pi)^{\wedge}\Bigl(\frac{t}{2\pi}\Bigr)
\Biggl(-\frac{N_v}{2}\log{N_v \pi}+\frac{N_v}{2}\frac{\G'}{\G}\Bigl(\frac{N_v}{2}\bigl(\frac{1}{2}+\xi_{v,j}(\pi)+it\bigr)\Bigr)\Biggr)dt\\
&=F(0)\log{\frac{1}{(N_v \pi)^{N_v}}}
+\frac{N_v}{2}\frac{1}{2\pi}\int^{\infty}_{-\infty}
\widetilde{F}_{v,j}(\cdot,\pi)^{\wedge}\Bigl(\frac{t}{2\pi}\Bigr)
\frac{\G'}{\G}\Bigl(\frac{N_v}{2}\bigl(\frac{1}{2}+\xi_{v,j}(\pi)\bigr)+i\frac{N_v}{2}t\Bigr)dt.
\end{align*}
 Here, for $a,b>0$ and $G\in L^{1}(\bR)$
 satisfying $V_{\bR}(G)<\infty$ and $G(x)=G(0)+O(|x|)$ as $s\to 0$,
 the following formula was also established in \cite{Barner1981};
\[
 \frac{1}{2\pi}\int^{\infty}_{-\infty}\hat{G}\Bigl(\frac{t}{2\pi}\Bigr)
\frac{\G'}{\G}\Bigl(a+i\frac{t}{b}\Bigr)dt
=\int^{\infty}_{0}\Bigl(\frac{G(0)}{x}-\frac{be^{(1-a)bx}}{1-e^{-bx}}G(-x)\Bigr)e^{-bx}dx. 
\]
 Using this formula
 with $G=\widetilde{F}_{v,j}$,
 $a=\frac{N_v}{2}(\frac{1}{2}+\xi_{v,j}(\pi))$ and $b=\frac{2}{N_v}$
 (remark that, thanks to the assumption (c), we can indeed apply this formula),
 we eventually obtain  
\[
 Y_{v,j}(F,\pi)=F(0)\log{\frac{1}{(N_v \pi)^{N_v}}}+W_{v,j}(F,\pi).
\] 
 This completes the proof.
\end{proof}

\section{Proof of Theorem~\ref{thm:main}}
\label{sec:proof}

 Using the explicit formula \eqref{for:explicitformula},
 we first give a meromorphic continuation of $\xi(s,z,\pi)$.

\begin{prop}
\label{prop:merocon_xi}
 For $\Re(z)>1$, we have 
 \begin{align}
\label{for:xi2}
 \xi(s,z,\pi)
&=\Biggl[\Bigl(\frac{2\pi}{z}\Bigr)^s+\Bigl(\frac{2\pi}{z-1}\Bigr)^s\Biggr]\d_{1,{\bf 1}}
+\frac{(2\pi)^s}{2\pi i}\int_{C}t^{{-s}}\frac{L'}{L}(z-t,\pi)dt\\
&\ \ \ -\sum_{v\in S_{\infty}}\sum^{d}_{j=1}(N_v\pi)^{s}\z\Bigl(s,\frac{N_v(z+\mu_{v,j}(\pi))}{2}\Bigr),
\nonumber
\end{align}
 where $C$ is the contour consisting of the lower edge of the cut from $-\infty$ to $-\d$,
 the circle $t=\d e^{i\theta}$ for $-\pi\le \theta\le \pi$ and the upper edge of the cut from $-\d$ to
 $-\infty$. 
 This gives a meromorphic continuation of $\xi(s,z,\pi)$ as a function of $s$
 to the whole plane $\bC$ with a simple pole at $s=1$. 
\end{prop}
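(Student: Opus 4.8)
The plan is to obtain \eqref{for:xi2} by feeding Barner's explicit formula (Lemma~\ref{thm:Explicitformula}) a test function engineered so that its transform $\Phi_F$ reproduces the general term of $\xi_K(s,z;\chi)$. For $\Re(z)>1$ and $\Re(s)>2$ I would use
\[
 F(x)=F_{s,z}(x):=\begin{cases}\dfrac{(2\pi)^s}{\G(s)}\,x^{s-1}e^{(\frac{1}{2}-z)x}, & x>0,\\[1ex] 0, & x\le0,\end{cases}
\]
for which $\int_{0}^{\infty}x^{s-1}e^{-wx}\,dx=\G(s)w^{-s}$ with $w=z-\r$ (legitimate since $\Re(\r)<1<\Re(z)$ forces $\Re(w)>0$) yields $\Phi_F(\r)=\bigl(\tfrac{z-\r}{2\pi}\bigr)^{-s}$, so that the left side of \eqref{for:explicitformula} is exactly $\xi_K(s,z;\chi)$. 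Before invoking the formula one verifies the three hypotheses of Lemma~\ref{thm:Explicitformula}: condition~(a) holds for any $b<\Re(z)-1$; condition~(b) holds with $F(0)=0$ as soon as $\Re(s)>1$; and condition~(c), demanding $F_v(x;\chi)+F_v(-x;\chi)=O(|x|)$ near $0$, forces $\Re(s)>2$, since there the left side is a constant times $x^{s-1}$. Thus \eqref{for:xi2} is first proved on $\Re(s)>2$.

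I would then evaluate the right side of \eqref{for:explicitformula} term by term. As $\Phi_F(0)=(2\pi/z)^s$ and $\Phi_F(1)=(2\pi/(z-1))^s$, the first term is $\e_\chi\bigl((\tfrac{2\pi}{z})^s+(\tfrac{2\pi}{z-1})^s\bigr)$; the term proportional to $F(0)$ vanishes; and, $F$ being supported on $x>0$, only the $\chi(\fp^l)F(\log N(\fp)^l)$ piece of the prime sum remains, which, after inserting the formula for $F$, equals $-\frac{(2\pi)^s}{\G(s)}\sum_{\fp}(\log N\fp)^s\sum_{l\ge1}\chi(\fp)^l l^{s-1}N\fp^{-lz}$. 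On the other hand, Hankel's formula $\frac{1}{2\pi i}\int_{L_-}e^{at}t^{-s}\,dt=a^{s-1}/\G(s)$ for $a>0$, applied with $a=l\log N\fp$, together with the Dirichlet series $\frac{L_K'}{L_K}(z-t;\chi)=-\sum_{\fp}\log N\fp\sum_{l\ge1}\chi(\fp)^l N\fp^{-l(z-t)}$, identifies this prime sum with $\frac{(2\pi)^s}{2\pi i}\int_{L_-}\frac{L_K'}{L_K}(z-t;\chi)\,t^{-s}\,dt$; exchanging $\sum_{\fp,l}$ with $\int_{L_-}$ is justified because on $L_-$ (with $\d$ small) one has $\Re(z-t)>1$, so the Dirichlet series for $\frac{L_K'}{L_K}$ is uniformly dominated there, decays exponentially along the two rays, while $|t^{-s}|$ grows only polynomially in $|t|$. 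Finally, using $F(0)=0$, $F_v(-x;\chi)=0$ for $x>0$, the expansion $\frac{1}{1-e^{-2x/N_v}}=\sum_{k\ge0}e^{-2kx/N_v}$, and once more $\int_{0}^{\infty}x^{s-1}e^{-wx}\,dx=\G(s)w^{-s}$, one computes $W_v(F;\chi)=-(N_v\pi)^s\z\bigl(s,\tfrac{N_v(z+i\vp_v)+|m_v|}{2}\bigr)$. Summing the three contributions gives \eqref{for:xi2} on $\Re(s)>2$, hence on $\Re(s)>1$ since both sides are holomorphic there.

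The meromorphic continuation is then read off from \eqref{for:xi2}. The first bracketed term is entire in $s$ (here $z,z-1\neq0$). Each Hurwitz zeta $\z\bigl(s,\tfrac{N_v(z+i\vp_v)+|m_v|}{2}\bigr)$ has shift parameter of positive real part, hence is meromorphic in $s$ with a single simple pole at $s=1$. And the contour integral is entire in $s$: on a neighbourhood of $L_-$ the function $t\mapsto\frac{L_K'}{L_K}(z-t;\chi)$ is holomorphic, because there $\Re(z-t)>1$ puts $z-t$ in the domain of absolute convergence of the Euler product, where $L_K(\cdot;\chi)$ has neither zeros nor poles, and moreover $\frac{L_K'}{L_K}(z-t;\chi)$ decays exponentially as $t\to-\infty$ along the cut, so the integral converges locally uniformly in $s\in\bC$ and defines a holomorphic function of $s$. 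Consequently \eqref{for:xi2} exhibits $\xi_K(s,z;\chi)$ as a meromorphic function on $\bC$ whose only singularity is a simple pole at $s=1$; in particular it is holomorphic at each $s=1-r$, $r\in\bN$, which is what makes $\Xi_{K,r}(z;\chi)$ well defined.

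I expect the main obstacle to be the interplay between the test function and the prime sum: verifying that $F_{s,z}$ meets Barner's normalization and local-growth requirements (which is exactly where the temporary constraint $\Re(s)>2$ appears, later removed by the continuation just established), and then carrying out carefully the bookkeeping that matches the prime sum of \eqref{for:explicitformula} with the Hankel-contour integral, including the uniform estimates needed to interchange the prime sum, the contour integral, and the $T\to\infty$ limit.
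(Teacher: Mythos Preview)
Your proposal is correct and follows essentially the same route as the paper: apply Barner's explicit formula to the test function $F(x)=x^{s-1}e^{(\frac12-z)x}\mathbf{1}_{x>0}$ (you normalize by $(2\pi)^s/\Gamma(s)$ up front, the paper divides by $\Gamma(s)(2\pi)^{-s}$ at the end), identify the prime sum with the Hankel contour integral via $\frac{a^{s-1}}{\Gamma(s)}=\frac{1}{2\pi i}\int_{L_-}e^{at}t^{-s}\,dt$, and compute $W_v$ as a Hurwitz zeta value. Your observation that condition~(c) genuinely needs $\Re(s)>2$ (with a subsequent extension by holomorphy) is in fact more careful than the paper, which asserts without comment that $\Re(s)>1$ suffices.
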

\begin{proof}
 For $\Re(z)>1$ and $\Re(s)>1$, define the function $F:\bR\to\bC$ by 
\[
 F(x):=
\begin{cases}
 x^{s-1}e^{-(z-\frac{1}{2})x} & (x\ge 0),\\
 0 & (x<0),
\end{cases}
\]
 which satisfies the conditions (a), (b) and (c) in Proposition~\ref{prop:Explicitformula}.
 We can easily see that  
\begin{align*}
 \Phi_F(w)&=\frac{\G(s)}{(z-w)^s}, \quad \textrm{whence} \quad  
 \Phi_F(0)=\frac{\G(s)}{z^s}, \ \Phi_F(1)=\frac{\G(s)}{(z-1)^s},
\end{align*}
 and
\begin{align*}
 W_{v,j}(F,\pi)
&=-\int^{\infty}_{0}x^{s-1}\frac{e^{-(z+\mu_{v,j}(\pi))x}}{1-e^{-\frac{2x}{N_{v}}}}dx\\
&=-\G(s)\Bigl(\frac{N_v}{2}\Bigr)^{s}\z\Bigl(s,\frac{N_v(z+\mu_{v,j}(\pi))}{2}\Bigr).
\end{align*}
 In the second equality,
 we have used the well-known formula 
\[
 \G(s)\z(s,z)=\int^{\infty}_{0}x^{s-1}\frac{e^{-zx}}{1-e^{-x}}dx \qquad (\Re(s)>1).
\]
 Therefore, the explicit formula \eqref{for:explicitformula} reads
\begin{align*}
 \xi(s,z,\pi)
&=\Biggl[\Bigl(\frac{2\pi}{z}\Bigr)^s+\Bigl(\frac{2\pi}{z-1}\Bigr)^s\Biggr]\d_{1,{\bf 1}}
-(2\pi)^s\sum_{v\in S_f}\sum^{\infty}_{l=1}\frac{\L_{\pi}(q_v^{l})}{q_v^{lz}}\frac{(l\log{q_v})^{s-1}}{\G(s)}\\
&\ \ \ -\sum_{v\in S_{\infty}}\sum^{d}_{j=1}(N_v\pi)^{s}\z\Bigl(s,\frac{N_v(z+\mu_{v,j}(\pi))}{2}\Bigr).
\end{align*}
 Moreover, using the equation 
\[
 \frac{a^{s-1}}{\G(s)}=\frac{1}{2\pi i}\int_{C}t^{{-s}}e^{at}dt \qquad (a>0),
\]
 we have 
\begin{align}
\label{for:integral}
-(2\pi)^s\sum_{v\in S_f}\sum^{\infty}_{l=1}\frac{\L_{\pi}(q_v^{l})}{q_v^{lz}}\frac{(l\log{q_v})^{s-1}}{\G(s)}
&=-\frac{(2\pi)^s}{2\pi i}\int_{C}t^{{-s}}\sum_{v\in
 S_f}\sum^{\infty}_{l=1}\L_{\pi}(q_v^{l})q_v^{-l(z-t)}dt\\
&=\frac{(2\pi)^s}{2\pi i}\int_{C}t^{{-s}}\frac{L'}{L}(z-t,\pi)dt.\nonumber
\end{align}
 Notice that the second equality follows from the expression 
\begin{equation}
\label{for:logderiLK}
 \frac{L'}{L}(s,\pi)
=-\sum_{v\in S_f}\sum^{\infty}_{l=1}\L_{\pi}(q_v^{l})q_v^{-ls},
\end{equation}
 which is obtained from the Euler product expression of $L(s,\pi)$.
 This shows the equation \eqref{for:xi2}. 
 By the same argument performed in \cite{Deninger1992},
 we see that the last integral in \eqref{for:integral} converges absolutely for all $s\in\bC$,
 whence it defines an entire function as a function of $s$.
 Therefore, the expression \eqref{for:xi2} 
 indeed gives a meromorphic continuation of $\xi(s,z,\pi)$ to $\bC$
 which has only a simple pole coming from the Hurwitz zeta function at $s=1$.
 This ends the proof.
\end{proof}

 Proposition~\ref{prop:merocon_xi} shows that 
 $\xi(s,z,\pi)$ is holomorphic at $s=1-r$ for all $r\in\bN$
 and hence, in particular, that the function $\Xi_r(z,\pi)$ is well-defined.
 We now give a proof of our main result.

\begin{proof}
[Proof of Theorem~\ref{thm:main}] 
 Let $\Re(z)>\frac{3}{2}-\frac{1}{d^2+1}$.
 Let us calculate the derivative of $\xi(s,z,\pi)$ at $s=1-r$ via the expression \eqref{for:xi2}. 
 Write $\xi(s,z,\pi)=A_0(s,z,\pi)+A_{f}(s,z,\pi)+A_{\infty}(s,z,\pi)$ where  
\begin{align*}
 A_0(s,z,\pi)
&:=\Biggl[\Bigl(\frac{2\pi}{z}\Bigr)^s+\Bigl(\frac{2\pi}{z-1}\Bigr)^s\Biggr]\d_{1,{\bf 1}},\\
 A_f(s,z,\pi)
&:=\frac{(2\pi)^s}{2\pi i}\int_{C}t^{{-s}}\frac{L'}{L}(z-t,\pi)dt,\\
 A_{\infty}(s,z,\pi)
&:=-\sum_{v\in S_{\infty}}\sum^{d}_{j=1}(N_v\pi)^{s}\z\Bigl(s,\frac{N_v(z+\mu_{v,j}(\pi))}{2}\Bigr).
\end{align*}
 At first, it is easy to see that 
\begin{align}
\label{for:deriA0} 
 \exp\Bigl(-\frac{\p}{\p s}A_0(s,z,\pi)\Bigl|_{s=1-r}\Bigr)
&=\Biggl[\Bigl(\frac{z}{2\pi}\Bigr)^{(\frac{z}{2\pi})^{r-1}}
\Bigl(\frac{z-1}{2\pi}\Bigr)^{(\frac{z-1}{2\pi})^{r-1}}\Biggr]^{\d_{1,{\bf 1}}}.
\end{align}
 We next calculate the derivative of $A_f(s,z,\pi)$ by the same manner in \cite{Deninger1992}.
 It is clear that  
\begin{align*}
 -\frac{\p}{\p s}A_f(s,z,\pi)\Bigl|_{s=1-r}
&=\frac{(2\pi)^{1-r}}{2\pi i}\int_{C}\frac{L'}{L}(z-t,\pi)
t^{{r-1}}\log{\frac{t}{2\pi}}dt.
\end{align*}
 Furthermore, since
\begin{align*}
 \frac{1}{2\pi i}\int_{C}\frac{L'}{L}(z-t,\pi)t^{{r-1}}\log{\frac{t}{2\pi}}dt
 &=\frac{1}{2\pi i}\int^{0}_{\infty}\frac{L'}{L}(z-xe^{-\pi i},\pi)(xe^{-\pi
 i})^{{r-1}}\log{\frac{xe^{-\pi i}}{2\pi}}e^{-\pi i}dx\\
&\ \ \ +\frac{1}{2\pi i}\int^{\infty}_{0}\frac{L'}{L}(z-xe^{\pi i},\pi)(xe^{\pi
 i})^{{r-1}}\log{\frac{xe^{\pi i}}{2\pi}}e^{\pi i}dx\\
 &=\frac{1}{2\pi i}\int^{\infty}_{0}\frac{L'}{L}(z+x,\pi)(-1)^{r-1}x^{{r-1}}\Bigl(\log{\frac{x}{2\pi}}-\pi
 i\Bigr)dx\\
&\ \ \ -\frac{1}{2\pi i}\int^{\infty}_{0}\frac{L'}{L}(z+x,\pi)(-1)^{r-1}x^{{r-1}}\Bigl(\log{\frac{x}{2\pi}}+\pi
 i\Bigr)dx\\
&=(-1)^{r}\int^{\infty}_{0}\frac{L'}{L}(z+x,\pi)x^{r-1}dx,
\end{align*}
 using the formula \eqref{for:logderiLK} again,
 we have 
\begin{align*}
 -\frac{\p}{\p s}A_f(s,z,\pi)\Bigl|_{s=1-r}
&=(-1)^{r-1}(2\pi)^{1-r}\sum_{v\in S_f}\sum^{\infty}_{l=1}\L_{\pi}(q_v^{l})q_v^{-lz}
\int^{\infty}_{0}x^{r-1}q_v^{-lx}dx\\
&=(-1)^{r-1}(r-1)!(2\pi)^{1-r}\sum_{v\in S_f}\sum^{\infty}_{l=1}\log{q_v}\sum^{d}_{j=1}\a_{v,j}(\pi)^l\frac{q_v^{-lz}}{(l\log{q_v})^r}\\
&=(-1)^{r-1}(r-1)!(2\pi)^{1-r}\sum_{v\in S_f}\sum^{d}_{j=1}(\log{q_v})^{1-r}Li_r\bigl(\a_{v,j}(\pi)q_v^{-z}\bigr)\\
&=(-1)^{r-1}(r-1)!(2\pi)^{1-r}\log{L_r(z,\pi)}.
\end{align*}
 This shows that
\begin{equation}
\label{for:deriAf}
 \exp\Bigl(-\frac{\p}{\p s}A_f(s,z,\pi)\Bigl|_{s=1-r}\Bigr)
=L_r(z,\pi)^{(-1)^{r-1}(r-1)!(2\pi)^{1-r}}. 
\end{equation}
 Finally, from the fact $\z(1-r,z)=-\frac{B_r(z)}{r}$
 and the definition $\log{\mG_r(z)}=\frac{\p}{\p s}\z(s,z)\bigl|_{s=1-r}$,
 we have 
\begin{align*}
 -\frac{d}{ds}&A_{\infty}(s,z,\pi)\Bigl|_{s=1-r}\\
&=\sum_{v\in S_{\infty}}\sum^{d}_{j=1}(N_v\pi)^{1-r}
\Biggl[-\frac{B_r(\frac{N_v(z+\mu_{v,j}(\pi))}{2})}{r}\log{(N_v\pi)}+\log\mG_{r}\Bigl(\frac{N_v(z+\mu_{v,j}(\pi))}{2}\Bigr)\Biggl],
\end{align*}
 whence 
\begin{align}
\label{for:deriAinfty}
 \exp\Bigl(-\frac{d}{ds}A_{\infty}(s,z,\pi)\Bigl|_{s=1-r}\Bigr)
&=\prod_{v\in S_{\infty}}\prod^{d}_{j=1}\frac{1}{\sqrt{2N_v}}\G_{r,v}\bigl(z+\mu_{v,j}(\pi)\bigr)\\
&=2^{-\frac{1}{2}nd}L_{r,\infty}(z,\pi).\nonumber
\end{align}
 Combining three equations \eqref{for:deriA0}, \eqref{for:deriAf} and \eqref{for:deriAinfty},
 we obtain the desired formula \eqref{for:HDregCusp}.
 This completes the proof of the theorem.
\end{proof} 



\begin{example}
\label{ex:d=1}
 Let us explain the case $d=1$ (and $r=1$) more precisely,
 that is, the case of Hecke $L$-functions (see \cite{Barner1981}).
 Let $\chi$ be a Hecke gr\"ossencharacter with the conductor $\ff$.
 It is known that there exists
 some $\vp_v=\vp_v(\chi)\in\bR$ with $\sum_{v\in S_{\infty}}N_v\vp_v=0$
 and $m_v=m_v(\chi)\in\bZ$ such that 
\[
 \c\bigl((\a)\bigr)
=\prod_{v\in S_{\infty}}|\a_v|^{-iN_v\vp_v}\Bigl(\frac{\a_v}{|\a_v|}\Bigr)^{m_v}
\quad (\a\in \cO_K\ \textrm{with}\ \a\equiv 1\ \mathrm{mod}^{\times}\,{\ff}),
\]
 where $\mathrm{mod}^{\times}$ indicates the multiplicative congruence, 
 $(\a)$ the principal ideal generated by $\a$ and 
 $\a_v$ the image of $\a$ of the embedding $K\hookrightarrow K_v$.
 In this case, we know that $\a_{\fp,1}(\chi)=\chi(\fp)$ for $\fp\in S_f$
 and $\m_{v,1}(\chi)=\frac{|m_{v}|}{N_v}+i\vp_{v}$ for $v\in S_{\infty}$.
 Namely,
\begin{align*}
 L(s,\chi)
&=\prod_{\fp\in S_f}\bigl(1-\chi(\fp)N(\fp)^{-s}\bigr)^{-1} \qquad (\Re(s)>1),\\
 L_{\infty}(s,\chi)
&=\prod_{v\in S_{\infty}}\G_v\Bigl(s+\frac{|m_{v}|}{N_v}+i\vp_{v}\Bigr), 
\end{align*}
 where $N$ denotes the absolute norm
 (that is, $N(\fp)=q_{\fp}=\#\cO_K/\fp$).
 We remark that $N_{\chi}$ in \eqref{for:fe} is given by
 $N_{\chi}=N(\ff)|d_K|$ where $d_K$ is the discriminant of $K$.
 Now, from \eqref{for:regCusp}, 
 letting $\L(s,\chi)=L_{\infty}(s,\chi)L(s,\chi)$,
 we have 
\[
 \regprod_{\rho\in \mathcal{R}(\chi)}\Bigl(\frac{z-\rho}{2\pi}\Bigr)
=2^{-\frac{1}{2}n}\Biggl[\frac{z(z-1)}{(2\pi)^2}\Biggr]^{\d_{{\bf 1}}}\L(z,\chi).
\]
 where $\d_{{\bf 1}}=\d_{{\bf 1}}(\chi):=1$ if $\chi={\bf 1}$ and $0$ otherwise.
\end{example}

\begin{remark}
  As analogues of Theorem~\ref{thm:main},
 ``higher depth determinants'' of the Laplacian on compact Riemann surfaces of genus
 $g\ge 2$ are investigated in \cite{KurokawaWakayamaYamasaki}
 (see also \cite{Yamasaki} for the corresponding results on higher dimensional spheres).
 We notice that these are defined like \eqref{def:Xi_r} but
 we employ the spectral zeta functions of the Laplacian on the Riemann surface instead of $\xi(s,z,\pi)$, 
 whence the determination of gamma factors is also involved.
\end{remark}

\begin{remark}
 It seems that the argument in this paper can be extended essentially
 for Dirichlet series in the so-called Selberg class, namely, 
 series satisfying the Ramanujan conjecture, has an Euler product expression, 
 admits an analytic continuation except for $s=1$
 and satisfies a functional equation (see, e.g., \cite{Steuding2007}). 
\end{remark}

\section{Analytic properties of the poly $L$-functions}
\label{sec:GRH}

 We first show that $L_r(s,\pi)$ converges absolutely in a right half-plane.

\begin{lem}
\label{lem:absoconv}
 The infinite product \eqref{def:poly-L} converges absolutely
 for $\Re(s)>\frac{3}{2}-\frac{1}{d^2+1}$. 
\end{lem}
\begin{proof}
 Using the equality (see \cite{LuoRudnickSarnak1999}, also \cite{IwaniecSarnak2000})
\[
 \bigl|\log_{q_v}|\a_{v,j}(\pi)|\bigr|<\frac{1}{2}-\frac{1}{d^2+1}, 
\]
 we have 
\begin{align*}
 \sum_{v\in S_f}
\Biggl|\log\Bigl(\prod^{d}_{j=1}H_r\bigl(\a_{v,j}(\pi)q_v^{-s}\bigr)^{-(\log{q_v})^{1-r}}\Bigr)\Biggr|
&\le  \sum_{v\in S_f}
\sum^{d}_{j=1}\sum^{\infty}_{m=1}(\log{q_v})^{1-r}\frac{1}{m^r}|\a_{v,j}(\pi)|^mq_v^{-m\Re(s)}\\
&\le \sum_{v\in S_f}
\sum^{d}_{j=1}\sum^{\infty}_{m=1}\frac{1}{m}q_v^{-m(\Re(s)-\frac{1}{2}+\frac{1}{d^2+1})}\\
&=d\log{\z_K\Bigl(\Re(s)-\frac{1}{2}+\frac{1}{d^2+1}\Bigr)}.
\end{align*}
 Hence the claim 
 follows from the fact that $\z_K(s)$ converges absolutely for $\Re(s)>1$.
\end{proof}

 Let $\Omega(\pi)$ be the set of all complex numbers which are not of the form $\rho-\lambda$ 
 where $\rho$ is a trivial or a non-trivial zero of $L(s,\pi)$ and $\lambda\ge 0$ or,
 if $d=1$ and $\pi={\bf 1}$, then $1-\lambda$ where $\lambda\ge 0$ (See Figure~1).
 Notice that the trivial zeros of $L(s,\pi)$ are
 of the form $\rho=-\mu_{v,j}(\pi)-\frac{2l}{N_v}$
 for $v\in S_{\infty}$, $1\le j\le d$ and $l\in\bZ_{\ge 0}$.
 We next give an analytic continuation of $L_r(s,\pi)$ to $\Omega(\pi)$ for $r\ge 2$.

\begin{figure}[htbp]
\label{fig:Omega}
 \begin{center}
 \psfrag{Re}{\small $\Re$}
 \psfrag{Im}{\small $\Im$}
 \psfrag{B}{\small $0$}
 \psfrag{C}{\small $1$}
 \psfrag{D}{\small $\frac{1}{2}$}
 \includegraphics[clip,width=60mm]{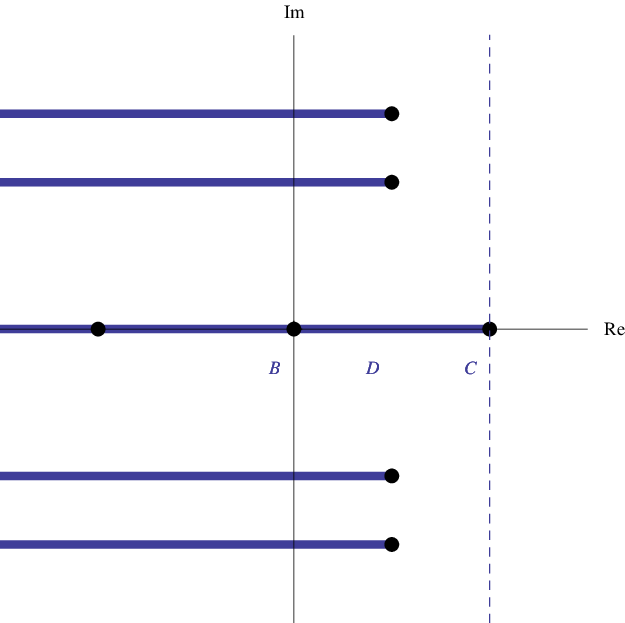}
 \caption{The region $\Omega(\pi)$ (if $d=1$ and $\pi={\bf 1}$).}
\end{center}
\end{figure}

\begin{lem}
 It holds that 
\begin{equation}
\label{for:ladder}
 \frac{d^{r-1}}{ds^{r-1}}\log{L_r(s,\pi)}
=(-1)^{r-1}\log{L(s,\pi)}
\qquad \bigl(\Re(s)>\frac{3}{2}-\frac{1}{d^2+1}\bigr).
\end{equation} 
\end{lem}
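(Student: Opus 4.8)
The plan is to reduce \eqref{for:ladder} to a term-by-term differentiation of an absolutely convergent double Dirichlet-type series. First I would take logarithms in the Euler product \eqref{def:polyHecke}: since $\log H_r(w)=-Li_r(w)$ and $|\chi(\fp)N(\fp)^{-s}|<1$ for $\Re(s)>1$, the series $Li_r(w)=\sum_{m\ge 1}w^m/m^r$ gives
\[
 \log L^{(r)}_{K}(s;\chi)
=\sum_{\fp}(\log N(\fp))^{1-r}\,Li_r\Bigl(\frac{\chi(\fp)}{N(\fp)^s}\Bigr)
=\sum_{\fp}\sum_{m=1}^{\infty}\frac{(\log N(\fp))^{1-r}\,\chi(\fp)^m}{m^{r}}\,N(\fp)^{-ms},
\]
the logarithm being understood as the branch furnished by this series (and correspondingly $\log L_{K}(s;\chi)=\sum_{\fp}\sum_{m\ge 1}m^{-1}\chi(\fp)^m N(\fp)^{-ms}$ via its Euler product). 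The estimate recorded just after \eqref{def:polyHecke} shows this double series converges absolutely and, on compact subsets of $\{\Re(s)>1\}$, uniformly, so its sum is holomorphic there and may be differentiated term by term to any order.

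Next I would apply $\tfrac{d^{r-1}}{ds^{r-1}}$ inside the double sum. Using $\tfrac{d^{r-1}}{ds^{r-1}}N(\fp)^{-ms}=(-m\log N(\fp))^{r-1}N(\fp)^{-ms}$, the $(r-1)$-fold derivative of the general term acquires the factor $(-1)^{r-1}m^{r-1}(\log N(\fp))^{r-1}$, which cancels exactly against the factor $(\log N(\fp))^{1-r}m^{-r}$ already present, leaving $(-1)^{r-1}m^{-1}\chi(\fp)^m N(\fp)^{-ms}$. The resulting series $\sum_{\fp}\sum_{m\ge 1}m^{-1}|\chi(\fp)N(\fp)^{-s}|^m\le\log\z_K(\Re(s))$ still converges absolutely and locally uniformly, which both confirms the legitimacy of the rearrangement and identifies the limit.

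Hence
\[
 \frac{d^{r-1}}{ds^{r-1}}\log L^{(r)}_{K}(s;\chi)
=(-1)^{r-1}\sum_{\fp}\sum_{m=1}^{\infty}\frac{1}{m}\Bigl(\frac{\chi(\fp)}{N(\fp)^s}\Bigr)^{m}
=(-1)^{r-1}\sum_{\fp}\Bigl(-\log\bigl(1-\chi(\fp)N(\fp)^{-s}\bigr)\Bigr)
=(-1)^{r-1}\log L_{K}(s;\chi),
\]
the last equality being the Euler product of $L_{K}(s;\chi)$ (and using $\chi(\fp)^m=\chi(\fp^m)$). This is precisely \eqref{for:ladder}.

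I do not expect a genuine obstacle. The only point requiring care is the justification of differentiating the double series $r-1$ times, and this is handled entirely by the absolute, locally uniform convergence already noted after \eqref{def:polyHecke}, together with the harmless remark that multiplying the terms by the polynomially growing factors $m^{k}(\log N(\fp))^{k}$ ($0\le k\le r-1$) and dividing by $m^{r}(\log N(\fp))^{r-1}$ does not destroy convergence, since $\log N(\fp)\ge\log 2>0$ for every prime ideal $\fp$. It is also worth stating explicitly that the asserted identity holds between the particular holomorphic branches of $\log L^{(r)}_{K}$ and $\log L_{K}$ defined by the displayed series.
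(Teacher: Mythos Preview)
Your argument is correct. The paper proceeds slightly differently: rather than expanding $Li_r$ into its series and differentiating term by term $r-1$ times at once, it uses the functional relation $\frac{d}{dz}Li_r(z)=z^{-1}Li_{r-1}(z)$ together with the chain rule to prove the single-step ladder
\[
 \frac{d}{ds}\log L^{(r)}_{K}(s;\chi)=-\log L^{(r-1)}_{K}(s;\chi),
\]
and then obtains \eqref{for:ladder} by induction on $r$. Your direct computation is marginally more elementary (no induction, just a one-line cancellation), while the paper's version has the advantage of isolating the intermediate identity above, which is exactly what is integrated repeatedly in Corollary~\ref{cor:IteratedL}. Either route is perfectly adequate here.
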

\begin{proof}
 The case $r=1$ is trivial.
 Assume that $r\ge 2$. Then, using the differential equation 
\[
 \frac{d}{dz}Li_r(z)=z^{-1}Li_{r-1}(z),
\]
 we have 
\begin{align*}
 \frac{d}{ds}\log{L_r(s,\pi)}
&=\frac{d}{ds}\sum_{v\in S_f}\sum^{d}_{j=1}(\log{q_v})^{1-r}Li_r\bigl(\a_{v,j}(\pi)q_v^{-s}\bigr)\\
&=-\sum_{v\in S_f}\sum^{d}_{j=1}(\log{q_v})^{1-(r-1)}Li_{r-1}\bigl(\a_{v,j}(\pi)q_v^{-s}\bigr)\\
&=-\log{L_{r-1}(s,\pi)}.
\end{align*}
 Therefore we inductively obtain the formula \eqref{for:ladder}.
\end{proof}

\begin{prop}
\label{prop:IteratedL}
 Fix $a\in\bC$ with $\Re(a)>\frac{3}{2}-\frac{1}{d^2+1}$.
 Then, for $r\ge 2$, we have  
\begin{align}
\label{for:IntRep}
 L_r(s,\pi)
=Q_{r}(s,a,\pi)\exp\Biggl(
 \underbrace{\int^{s}_{a}\int^{\xi_{r-1}}_{a}\cdots\int^{\xi_2}_{a}}_{r-1}
 \log L(\xi_1,\pi)d\xi_1\cdots d\xi_{r-1}\Biggl)^{(-1)^{r-1}}.
\end{align}
 Here $Q_r(s,a,\pi):=\prod^{r-2}_{k=0}L_{r-k}(a,\pi)^{\frac{(-1)^k}{k!}(s-a)^k}$.
 This gives an analytic continuation of $L_r(s,\pi)$ to the region $\Omega(\pi)$.
 In particular, $L_r(s,\pi)\ne 0$ for any $s\in \Omega(\pi)$.
\end{prop}
\begin{proof}
 The expression \eqref{for:IntRep} is obtained from \eqref{for:ladder} by induction on $r$.
 Moreover, since $\log L(\xi,\pi)$ is a (single-valued) holomorphic function in $\Omega(\pi)$,
 in the righthand-side of \eqref{for:IntRep}, one can move $s$ freely in $\Omega(\pi)$.
 Hence \eqref{for:IntRep} in fact gives 
 an analytic continuation of $L_r(s,\pi)$ to $\Omega(\pi)$.
\end{proof}

\begin{cor}
\label{cor:anaXi}
 The function
 $\Xi_{r}(z,\pi)$ with $r\ge 2$ admits an analytic continuation to the region $\Omega(\pi)$.
  Moreover, $\Xi_{r}(z,\pi)\ne 0$ for any $z\in \Omega(\pi)$.
\end{cor}
\begin{proof}
 We notice that, when $r\ge 2$,
 the Milnor gamma function $\mG_r(z)$ is a (single-valued) holomorphic function 
 in $\bC\setminus(-\infty,0]$ (see \cite{KOW}).
 Hence, the claim follows from the expression \eqref{for:HDregCusp} 
 together with Proposition~\ref{prop:IteratedL}.
\end{proof}

\begin{remark}
 Let
\[
 \widetilde{L}_{r}(s,\pi)
:=\prod_{v\in S_{f}}\prod^{d}_{j=1}H_r\bigl(\a_{v,j}(\pi)q_v^{-s}\bigr)^{-1}.
\]
 One can similarly prove that $\widetilde{L}_{r}(s,\pi)$
 converges absolutely for $\Re(s)>\frac{3}{2}-\frac{1}{d^2+1}$
 and see that  $\widetilde{L}_{1}(s,\pi)=L(s,\pi)$.
 However, it does not seem to have an analytic continuation to the whole plane $\bC$ when $r\ge 2$ again.
 Actually, in \cite{PolyEuler}, it was shown that
 $\widetilde{\zeta}_r(s):=\widetilde{L}_r(s,{\bf 1})$ in the case $K=\bQ$ and $d=1$ has an analytic
 continuation to the region $\Re(s)>0$ but has a natural boundary at the imaginary axis $\Re(s)=0$.
\end{remark}

 We finally show a relation between $L_r(s,\pi)$ and 
 the grand Riemann hypothesis for $L(s,\pi)$.
 Recall that the grand Riemann hypothesis asserts that $\Re(\rho)=\frac{1}{2}$ 
 for all $\rho\in\mathcal{R}(\pi)$.

\begin{cor}
 The grand Riemann hypothesis for $L(s,\pi)$ is equivalent to say that
 the function $(s-1)^{-\d_{1,{\bf 1}}(s-1)}L_2(s,\pi)$
 is a single-valued holomorphic function in $\Re(s)>\frac{1}{2}$.
\end{cor}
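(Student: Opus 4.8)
The plan is to exploit the ``ladder'' relation \eqref{for:ladder} with $r=2$, namely $\frac{d}{ds}\log L^{(2)}_K(s;\chi)=-\log L_K(s;\chi)$, together with the explicit continuation of $L^{(2)}_K(s;\chi)$ to $\Omega_K(\chi)$ furnished by Corollary~\ref{cor:IteratedL}, which reads
\[
 L^{(2)}_K(s;\chi)=Q^{(2)}_K(s,a)\exp\Bigl(-\int_a^s\log L_K(\xi;\chi)\,d\xi\Bigr)
\]
with $Q^{(2)}_K(s,a)=L^{(2)}_K(a;\chi)$ a nonzero constant once $a$ is fixed with $\Re(a)>1$. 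The point is that the right-hand side is built from $\log L_K(\xi;\chi)$, which is a genuine (single-valued) holomorphic function precisely on $\Omega_K(\chi)$; multivaluedness of $L^{(2)}_K(s;\chi)$ beyond $\Re(s)>1$ can only come from the logarithmic branch points of $\log L_K$ at the zeros (trivial and non-trivial) of $L_K(s;\chi)$, with their horizontal rays. So the question of when $(s-1)^{-\e_\chi(s-1)}L^{(2)}_K(s;\chi)$ is single-valued and holomorphic on $\Re(s)>\tfrac12$ reduces to controlling those branch points in that half-plane.

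First I would pin down the role of the prefactor $(s-1)^{-\e_\chi(s-1)}$: when $\chi$ is principal, $L_K(s;\chi)=\zeta_K(s)$ has a simple pole at $s=1$, so $\log L_K$ has a logarithmic singularity there of the form $-\log(s-1)+(\text{holo})$, and integrating gives a term $-(s-1)\log(s-1)+(s-1)$ inside the exponential, i.e. a factor $(s-1)^{s-1}$; multiplying by $(s-1)^{-\e_\chi(s-1)}=(s-1)^{-(s-1)}$ exactly cancels this. Thus, after this correction, the only possible obstructions to single-valued holomorphy in $\Re(s)>\tfrac12$ are the horizontal rays emanating (to the right, by the convention defining $\Omega_K(\chi)$) from zeros $\rho$ of $L_K(s;\chi)$. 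The trivial zeros all lie in $\Re(s)\le 0$ and hence contribute rays entirely inside $\Re(s)\le 0$, disjoint from the half-plane $\Re(s)>\tfrac12$; so they are harmless. Therefore the corrected function is single-valued and holomorphic on $\Re(s)>\tfrac12$ if and only if no non-trivial zero $\rho$ has $\Re(\rho)>\tfrac12$ \emph{and} no ray $\{\rho+\lambda:\lambda\ge0\}$ from a non-trivial zero enters $\Re(s)>\tfrac12$.

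Next I would close the loop between these two conditions and the extended Riemann hypothesis. If ERH holds, every non-trivial $\rho$ has $\Re(\rho)=\tfrac12$, so its ray lies in $\Re(s)\ge\tfrac12$ but meets the open half-plane $\Re(s)>\tfrac12$ only along $\Im(s)=\Im(\rho)$; one has to check that such a ray is genuinely a branch cut of $L^{(2)}_K$ or can be absorbed. Here I would invoke the functional equation: non-trivial zeros come in pairs $\rho,1-\overline\rho$ (and, by reality of the Dirichlet series in the class-character case, $\rho,\overline\rho$), and on the critical line the zero at $\rho$ and the zero at $\overline\rho$ together contribute $-\log|s-\rho|^2+(\text{holo})=-2\log(s-\rho)-\text{(something single-valued)}$ near the line — so actually I expect the cleaner route is: on $\Re(s)>\tfrac12$, $\log L_K(s;\chi)$ extends holomorphically (single-valued) iff $L_K$ has no zeros there, by the argument principle / monodromy around a loop in the simply connected region $\Re(s)>\tfrac12$, and then so does its antiderivative, hence so does $L^{(2)}_K$ after the $(s-1)^{-\e_\chi(s-1)}$ correction; conversely a zero with $\Re(\rho)>\tfrac12$ forces a logarithmic branch point. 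The main obstacle is precisely this borderline analysis: a zero exactly on $\Re(s)=\tfrac12$ does not obstruct single-valuedness in the \emph{open} half-plane $\Re(s)>\tfrac12$, so I must argue that ERH is equivalent to ``no zeros with $\Re(\rho)>\tfrac12$'' — which holds because the functional equation $\Lambda_K(1-s;\overline\chi)=W_K(\chi)\Lambda_K(s;\chi)$ reflects any hypothetical zero with $\Re(\rho)>\tfrac12$ to one with $\Re<\tfrac12$ and vice versa, so the absence of zeros in $\Re(s)>\tfrac12$ already forces all non-trivial zeros onto the line. Assembling: ERH $\iff$ no non-trivial zeros in $\Re(s)>\tfrac12$ $\iff$ $\log L_K(s;\chi)$ is single-valued holomorphic there $\iff$ $(s-1)^{-\e_\chi(s-1)}L^{(2)}_K(s;\chi)$ is single-valued holomorphic there, which is the claim.
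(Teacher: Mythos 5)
Your argument follows essentially the same route as the paper: the $r=2$ case of \eqref{for:IntRep}, the observation that $(s-1)^{-\e_{\chi}(s-1)}$ absorbs the $(s-1)^{(s-1)}$-type factor produced by the pole of $L_K(s;\chi)$ at $s=1$, and the concluding step identifying single-valuedness of $\exp\bigl(-\int_a^s\log((\xi-1)^{\e_\chi}L_K(\xi;\chi))d\xi\bigr)$ on the simply connected half-plane $\Re(s)>\frac12$ with the absence of zeros there, hence (via the functional equation pairing $\rho$ with $1-\overline{\rho}$) with ERH. The only harmless slips are that the cuts defining $\Omega_K(\chi)$ run to the \emph{left} (the excluded points are $\rho-\lambda$ with $\lambda\ge 0$), not to the right, and a sign in your intermediate $-(s-1)\log(s-1)$ term; neither affects the final monodromy argument.
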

\begin{proof}
 Let $s\in\Omega(\pi)$ and $\Re(a)>\frac{3}{2}-\frac{1}{d^2+1}$.
 Then, from \eqref{for:IntRep} with $r=2$, we have 
\begin{equation}
\label{for:poly-dede2}
 L_2(s,\pi)
=L_2(a,\pi)\exp\Bigl(-\int^{s}_a\log{L(\xi,\pi)}d\xi\Bigr),
\end{equation}
 where the path is taken in $\Omega(\pi)$.  
 Notice that, since 
\[
 \int^{s}_a\log{\bigl((\xi-1)^{\d_{1,{\bf 1}}}\bigr)}d\xi
=\d_{1,{\bf 1}}\Bigl[(s-1)\log{(s-1)}-s-\bigl((a-1)\log{(a-1)}-a\bigr)\Bigr],
\]
 we have 
\begin{equation}
\label{for:log-formula}
 e^{\d_{1,{\bf 1}}s}(s-1)^{-\d_{1,{\bf 1}}(s-1)}
=e^{\d_{1,{\bf 1}}a}(a-1)^{-\d_{1,{\bf 1}}(a-1)}
\exp\Bigl(-\int^{s}_a\log{\bigl((\xi-1)^{\d_{1,{\bf 1}}}\bigr)}d\xi\Bigr).
\end{equation}
 Therefore, from \eqref{for:poly-dede2} and \eqref{for:log-formula},
 we have
\begin{align*}
 e^{\d_{1,{\bf 1}}s}(s-&1)^{-\d_{1,{\bf 1}}(s-1)}L_2(s,\pi)\\
&=e^{\d_{1,{\bf 1}}a}(a-1)^{-\d_{1,{\bf 1}}(a-1)}L_2(a,\pi)
 \exp\Bigl(-\int^{s}_a\log{\bigl((\xi-1)^{\d_{1,{\bf 1}}}L(\xi,\pi)\bigr)}d\xi\Bigr).
\end{align*}
 Now the statement follows from the fact that
 $(\xi-1)^{\d_{1,{\bf 1}}}L(\xi,\pi)$ is holomorphic at $\xi=1$.
\end{proof}

\begin{Acknowledgement}
 The authors would like to thank the referee and the editor for
 their valuable comments and suggestions which help us to improve the manuscript.
 The authors also thank Kazufumi Kimoto and Miki Hirano for their interest to this study.
\end{Acknowledgement}



\bigskip

\noindent
\textsc{Masato WAKAYAMA}\\
 Faculty of Mathematics, Kyushu University,\\
 Motooka, Nishiku, Fukuoka, 819-0395, JAPAN.\\
\texttt{wakayama@math.kyushu-u.ac.jp}\\

\noindent
\textsc{Yoshinori YAMASAKI}\\
 Graduate School of Science and Engineering, Ehime University,\\
 Bunkyo-cho, Matsuyama, 790-8577 JAPAN.\\
 \texttt{yamasaki@math.sci.ehime-u.ac.jp}

\end{document}